\newtheorem{theorem}{Theorem}[section]
\newtheorem{proposition}[theorem]{Proposition}
\newtheorem{corollary}[theorem]{Corollary}
\theoremstyle{definition}
\DeclareMathOperator{\id}{id}
\DeclareMathOperator{\spanned}{span}
\title[Bilinear forms on Banach sequence spaces]{Extendibility of bilinear forms on Banach sequence spaces}
\author{Daniel Carando}\thanks{The first author was partially supported by UBACyT W746 and CONICET  PIP 0624.}
\address{Departamento de Matem\'{a}tica - Pab I,
Facultad de Cs. Exactas y Naturales, Universidad de Buenos Aires,
(1428) Buenos Aires, Argentina and  IMAS - CONICET} \email{dcarando@dm.uba.ar}
\author{Pablo Sevilla-Peris}
\address{Instituto Universitario de Matem\'atica Pura y Aplicada and DMA, ETSIAMN, Universitat Polit\`ecnica de Val\`encia, Valencia, Spain} \email{psevilla@mat.upv.es}
\thanks{The second author was supported by MICINN Project MTM2011-22417 }
\begin{document}
\begin{abstract}
We study Hahn-Banach extensions of multilinear forms defined on Banach sequence spaces. We characterize $c_0$ in terms of extension of
bilinear forms, and describe the Banach sequence spaces in which every bilinear form admits extensions to any superspace.
\end{abstract}

\maketitle

\section{Introduction}

One of the fundamental results in Functional Analysis is the  Hahn-Banach theorem. It was proved independently by Hahn in 1927 \cite{Ha27}
and by Banach in 1929 \cite{Ba29} (see also \cite[Chapitre~IV, \S 2]{Ba32}). In one of its forms, it states that if $X$ is subspace of a normed space $Z$, then every continuous, linear functional
$f: X \to \mathbb{K}$ can be extended to $Z$ preserving the norm. It soon became clear that a multilinear version of this result was not possible in general, and this started the search of
situations on which such multilinear extension theorems are possible. A particular positive result was given by Arens in 1951, where he showed how to extend the product on a
Banach algebra to its bidual and, also, how to extend bilinear operators defined on a couple of Banach spaces to their corresponding biduals \cite{Ar51, Ar51b}.
This is one of the lines to find extension theorems: given a space, find a superspace to which every multilinear mapping can be extended.
Aron and Berner went further on this line and showed in 1978 that every holomorphic function on a Banach space can be extended to an open subset of the bidual \cite{ArBe78}.

Another line is to fix a Banach space $X$ and consider the problem of extending bilinear forms defined on subspaces  of $X$. Maurey's extension theorem \cite[Corollary 12.23]{DiJaTo95} is classical example of this natural point of view, in which relevant advances have been obtained in the last years \cite{CaGaDePGSu,FePr10}.

A third way to face the extension problem is to find the bilinear mappings (on a fixed Banach space) that can be extended to every superspace. This was the point of
view taken by Grothendieck: in 1956 he showed in his th\'eor\`eme
fondamental \cite[page 60]{Gr53} that these are precisely those bilinear mappings factoring through Hilbert spaces via 2-summing operators. We say  that a bilinear form
$T:X \times  Y \rightarrow \mathbb{K}$ is \textit{extendible}  (see e.g. \cite{Ca99,CaGaJa01,JaPaPGVi07,KiRy98}) if for
all Banach spaces $E \supset X,\,  F\supset Y$, there exists a bilinear form defined on $E \times  F$
that extends $T$. Our aim, which can be framed in this last approach,  is to describe those spaces which enjoy a bilinear (or multilinear) Hahn-Banach theorem, in the sense
that every bilinear form is extendible.
Examples of such spaces are $A(\mathbb D)$, $H^\infty(\mathbb D)$, $\mathcal L^\infty$-spaces and \emph{Pisier} spaces, but a complete characterization is still unknown.
In this line, our main result is the following theroem, which solves the problem among Banach spaces with unconditional basis.

\begin{theorem}\label{main1}
 The only Banach space with an unconditional basis on which every bilinear form is extendible is $c_0$.
\end{theorem}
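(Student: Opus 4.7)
The ``if'' direction is recalled in the introduction: $c_0$ is an $\mathcal{L}^\infty$-space, so every bilinear form on $c_0\times c_0$ is extendible. For the converse I argue by contradiction, assuming $X\not\simeq c_0$ and producing a non-extendible bilinear form. The starting point is a hereditary observation: the extendibility property passes to any complemented subspace $Y\subset X$, because a bilinear form $S$ on $Y\times Y$ lifts via a projection $P:X\to Y$ to $S\circ(P,P)$ on $X\times X$, which extends by hypothesis and restricts back. Since every subsequence $(e_{n_k})$ of an unconditional basis spans a complemented subspace, the property descends to all closed spans of subsequences, and more generally to all block subspaces, so I can freely replace $X$ by such a subspace.

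The second step translates extendibility into a Littlewood-type inequality on the basis matrix. By Grothendieck's th\'eor\`eme fondamental, every extendible bilinear form factors as $T(x,y)=\langle u(x),v(y)\rangle_H$ with $u,v$ absolutely $2$-summing into a Hilbert space $H$. Since by hypothesis extendible and bounded bilinear forms coincide on $X$, an open mapping argument gives a uniform constant $C_X$ controlling $\pi_2(u)\,\pi_2(v)$ in terms of $\|T\|$. Combining this factorization with Khintchine's inequality applied to random-sign matrices should yield an estimate of the form
\[
\Bigl(\sum_{i,j}|T(e_i,e_j)|^{4/3}\Bigr)^{3/4}\le C'_X\,\|T\|
\]
valid for every bounded bilinear form $T$ on $X\times X$.

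The third step uses the failure $X\not\simeq c_0$ to find a block subspace on which this inequality is violated. When $(e_n)$ is not equivalent to the $c_0$-basis, unconditionality forces either $\sup_n\|\sum_{i\le n}e_i\|_X=\infty$ or the analogous blow-up for $(e_n^*)$ in $X^*$. A Bessaga--Pe{\l}czy\'nski-type extraction then produces a normalized block sequence $(u_k)$ spanning a subspace isomorphic to $\ell_p$ for some $1\le p<\infty$, and by the first step this subspace retains the extendibility property. On $\ell_p$, however, a random-sign bilinear form $T(x,y)=\sum_{i,j\le n}\varepsilon_{ij}\,u_i^*(x)u_j^*(y)$ has, for $p\ge 2$, operator norm bounded by a multiple of $n^{1-2/p}\sqrt{\log n}$ while the $\ell_{4/3}$ norm of its matrix is $n^{3/2}$, with a symmetric estimate for $p<2$. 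Either way the inequality of the second step is violated for $n$ large, the contradiction sought.

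\textbf{Main obstacle.} The crux is the extraction in the third step: from the abstract failure $(e_n)\not\sim c_0$-basis one must produce a workable block subspace on which a concrete bilinear form violates the Grothendieck/Littlewood factorization inequality. The cases where $X$ contains $\ell_1$, where $X$ is reflexive, and intermediate behaviors must all be treated, and the extracted block basis must span a complemented subspace so that the reduction of Step~1 applies. Running the explicit Khintchine calculations in each case, and making the open mapping argument of Step~2 genuinely quantitative, are where the bulk of the technical effort will lie.
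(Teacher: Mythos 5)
There is a genuine gap, and it sits exactly where you locate your ``main obstacle'': the extraction in Step~3 cannot be carried out. A Bessaga--Pe{\l}czy\'nski-type argument does not produce a block subspace isomorphic to some $\ell_p$ from the mere failure of $(e_n)$ being equivalent to the $c_0$-basis: Tsirelson's space has a $1$-unconditional basis, is reflexive, and contains no copy of any $\ell_p$ nor of $c_0$, so the case analysis you propose does not reduce to the presence of an $\ell_p$-block basis, and the concluding random-sign computation has nothing to act on. Even where a suitable block basis does exist, block subspaces spanned by an unconditional basis need not be complemented, so the (correct) hereditary observation of Step~1 --- which does apply to complemented subspaces and to subsequences of the basis --- does not transfer the extendibility hypothesis to them. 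Finally, the Littlewood-type inequality of Step~2 is not a uniform consequence of the hypothesis for a general $X$: passing from the $2$-summing factorization $T=\langle u(\cdot),v(\cdot)\rangle_H$ to an estimate on the matrix $\big(T(e_i,e_j)\big)$ requires control of the weak-$\ell_2$ norms $\sup_{\|x^*\|\le 1}\big(\sum_{i\le N}|x^*(e_i)|^2\big)^{1/2}$ of the basis, which equal $1$ for the $c_0$-basis but grow like $\sqrt{N}$ already in $\ell_1$; the inequality you write is essentially Littlewood's $4/3$-inequality on $c_0$ and thus presupposes the conclusion.

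The paper's route avoids all three problems. James' theorem is used only in its precise form: if the unconditional basis is not shrinking, then $X$ contains a \emph{complemented} copy of $\ell_1$, and a lemma of Kirwan and Ryan on bilinear forms on $\ell_1\times Y$ (forcing every operator $Y\to\ell_\infty(I)$ to be $2$-summing) yields a contradiction. In the shrinking case one works on the dual rather than inside $X$: Proposition~\ref{sumantes} --- a tensor-norm computation using Dvoretzky's theorem and Grothendieck's inequality, close in spirit to your Step~2 but formulated for operators $X^*\to\ell_2$ instead of for the basis matrix --- shows that every operator $X^*\to\ell_2$ is absolutely $1$-summing, and the Lindenstrauss--Pe{\l}czy\'nski theorem then forces the unconditional basis of $X^*$ to be equivalent to the $\ell_1$-basis, hence that of $X$ to the $c_0$-basis. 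No $\ell_p$-subspace of $X$ ever needs to be located.
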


 This theorem will follow as a consequence of Theorem~\ref{extens con base} below. We also characterize the Banach sequence spaces satisfying a bilinear Hahn-Banach theorem as those
``between'' $c_0$ and $\ell_\infty$ (see Corollary~\ref{ultimo}). As a byproduct, we obtain a partial answer to the following open problem:
if a sequence $X_n$ of $n$-dimensional Banach spaces is uniformly complemented in some $\mathcal L_\infty$, must these spaces be uniformly isomorphic to $\ell_\infty^n$? Corollary~\ref{equivalencias-extendibles}  gives a positive answer for sections of a Banach sequence space (see Proposition~\ref{open problem}).

\subsection{Preliminaries}

We briefly collect here some basic definitions that will be used throughout the paper. We will consider real or complex Banach spaces, that will be denoted
$X$, $Y, \ldots $. Unless otherwise stated they will be assumed to be infinite dimensional. The duals will be denoted by $X^{*}$, $Y^{*}$, $\ldots$.
Given two Banach spaces $X$ and $Y$, we write $X\approx Y$ if they are isomorphic and $X \stackrel{1}{\approx} Y$ if they are isometrically isomorphic.
We refer to \cite{AlKa06,Pi86} for basic concepts and notations on Banach spaces.

\smallskip
We denote $\mathcal{L}^{2} (X , Y)$ for the Banach space of all scalar valued, continuous, bilinear mappings (in short bilinear forms) on $X\times Y$. We
write $\mathcal{L}^{2} (X)$ whenever $Y = X$.

\smallskip
The space of extendible bilinear  forms is denoted by $\mathcal{E}^{2} (X, Y)$. The \textit{extendible  norm}
\begin{multline*}
\Vert  T \Vert _{\mathcal{E}} = \Vert  T \Vert _{\mathcal{E}^{2} (X,Y)} := \inf \{c>0:   \text{ for all } W\supseteq X, Z\supseteq Y \text{ there is an extension of }T\\
\text{ to }W\times Z    \text{ with norm}\leq c\}
\end{multline*}
makes $\mathcal{E}^{2} (X, Y)$ a Banach space.
Since every $\ell_\infty(I)$ space is injective (in fact, has the metric extension property), every bilinear form on such spaces is extendible and the extendible and
uniform norm coincide. Moreover, a
bilinear form $T$ on $X\times Y$ is extendible if and only if it extends to $\ell_\infty(I)\times \ell_\infty(J)$, for some $\ell_\infty(I) \supset X$ and $\ell_\infty(J)\supset Y$.
The supremum defining the extendible norm can be taken only over the extensions to  $\ell_\infty(I)\times \ell_\infty(J)$.

\smallskip
We write $\mathcal{L}(X;Y)$ for the space of all (continuous, linear)  operators $u:X \rightarrow Y$. We denote by $\Pi_{1} (X;Y)$  the space of absolutely summing
operators, $\Gamma_{\infty}(X;Y)$ for the $\infty$-factorable and $\Delta_{2}(X;Y)$ for the 2-dominated. Their corresponding norms are, respectively,
$\pi_1$, $\gamma_\infty$ and $\delta_2$ (see \cite{DeFl93,DiJaTo95} for definitions and basic properties).

\smallskip
We are going to use the theory tensor products and operator ideals as presented in \cite{DeFl93}. We recall some notation and definitions for completeness. The projective tensor norm $\pi$
 is defined, for a $z$ in the tensor product $ X\otimes Y$,  by
$$
\pi(z)=\inf\left\{\sum_{j=1}^r \| x_j\|\ \|y_j\|\right\},
$$
where the infimum is taken over
all the representations of $z$ of the form  $z=\sum_{j=1}^r  x_j\otimes y_j$.
The right-injective associate of $\pi$ is denoted by $\pi \backslash$. This tensor norm is  greatest right-injective tensor norm and  makes the following inclusion an isometry:
$$   X \otimes_{\pi \backslash} Y  \overset 1 \hookrightarrow  X \otimes_\pi \ell_{\infty}(B_{Y^*}),$$ where $B_{Y^*}$ is the  unit ball of $Y ^*$
(see \cite[Theorem 20.7.]{DeFl93} for details). Likewise, the injective associate $/ \pi \backslash$  is the largest
injective tensor norm and is induced by the isometric inclusion
$$   X \otimes_{/ \pi \backslash} Y  \overset 1 \hookrightarrow  \ell_{\infty}(B_{X^*})\otimes_\pi \ell_{\infty}(B_{Y^*}).$$
The metric extension property of $\ell_\infty(I)$ spaces implies that extendible bilinear forms are precisely the $/ \pi \backslash$-continuous ones:$$ \mathcal{E}^{2} (X, Y)\overset 1 =  \left(X \otimes_{/ \pi \backslash} Y\right)^* .$$
We refer to  \cite{DeFl93,DiJaTo95} for all the basic (and not so basic) facts and any undefined notation on tensor norms and operator ideals.

\smallskip
Given a family $\{X_{n}\}_{n}$ of Banach spaces where  $\dim X_{n} = n$, we say that $X_n$ are $K$-uniformly complemented in $X$ if for each $n$ we have
a mapping $i_n:X_n\to X$ and $q_n:X\to X_n$ such that $q_n\circ i_n$ is the identity on $X_n$ and $\|i_n\|\ \|q_n\|\le K$. In this case we also say that $X$ contains $X_n$
uniformly complemented. We note that if $X$ contains uniform copies of $\ell_\infty^n$ (i.e., $\mathbb K^n$ with the sup norm), then
the $\ell_\infty^n$ are uniformly complemented since they are injective spaces.

\subsection{Banach sequence spaces}

By a Banach  sequence space (also known as K\"othe sequence space) we will mean a Banach space $X \subseteq
\mathbb{K}^{\mathbb{N}}$ of sequences in $\mathbb{K}$ such that
$\ell_{1} \subseteq X \subseteq \ell_{\infty}$ with norm one inclusions satisfying that if
$x \in \mathbb{K}^{\mathbb{N}}$ and $y \in X$ are such that $\vert x_{n} \vert \leq \vert y_{n} \vert$ for all $n \in \mathbb{N}$, then
$x$ belongs to $X$ and $\Vert x \Vert \leq \Vert y \Vert$.\\
If $X$ is a Banach sequence space, we  denote by $\{e_{n}\}_{n}$  the sequence of canonical vectors, which is always a 1-unconditional basic sequence.
We define $X_{N} = \spanned \{e_{1} , \ldots , e_{N}\}$ and
$X_{0} = \overline{\spanned} \{e_{n}\}_{n}$. This last space is usually referred to as the minimal kernel of $X$. Given $x \in X$ we write $x^{N}=(x_{1} , \ldots , x_{N})$.
There are inclusions $i_{N}^{X} : X_{N} \hookrightarrow X$ and
projections $\pi_{N}^{X}: X \rightarrow X_{N}$ given by $i_{N}^{X} (x_{1}, \ldots , x_{N}) = (x_{1}, \ldots , x_{N}, 0, 0, \ldots )$ and
$\pi_{N}^{X}(x)=x^{N}$. The inclusions are isometric and the projections have norm $1$. For the case $X=\ell_p$ ($1\le p \le \infty$), we write $\ell_p^N$ for $X_N$.

\smallskip
\noindent Given a Banach sequence space $X$, its K\"othe dual is defined as
\[
 X^{\times} = \{ (z_{n})_{n} \in \mathbb{K}^{\mathbb{N}} \colon \textstyle\sum_{n} \vert z_{n} x_{n} \vert < \infty \text{ for all } x \in X\} \,.
\]
With the norm $\Vert z \Vert_{X^{\times}} = \sup_{\Vert x \Vert_{X} \leq 1} \sum_{n} \vert z_{n} x_{n} \vert$ it is again a Banach sequence space.

\smallskip
Following \cite[1.d]{LibroLiTz2}, a Banach sequence space $X$
is said to be $r$-convex (with $1 \leq r < \infty$) if there
exists a constant $\kappa > 0$ such that for any choice $x_{1}, \dots, x_{N} \in X$ we have
\[
\bigg\| \bigg( \Big( \sum_{j=1}^{N} | x_{j}(k)|^{r} \Big)^{1/r} \bigg)_{k=1}^{\infty} \bigg\|_{X}
\leq \kappa \ \bigg( \sum_{j=1}^{N} \| x_{j}\|_{X}^{r} \bigg)^{1/r}.
\]
On the other hand, $X$ is $s$-concave (with $1 \leq s < \infty$) if there is a constant $\kappa > 0$ such that
\[
\bigg( \sum_{j=1}^{N} \| x_{j}\|_{X}^{s} \bigg)^{1/s}
\leq \kappa \ \bigg\| \bigg( \Big( \sum_{j=1}^{N} | x_{j}(k)|^{s} \Big)^{1/s}
\bigg)_{k=1}^{\infty} \bigg\|_{X}
\]
for all $x_{1}, \dots, x_{N} \in X$.

It is well known that $\ell_{p}$ is $r$ convex for $1 \leq r \leq p$ and $s$-concave for $p \leq s < \infty$.\\

The following result is probably known. However we were not able to find a proper reference of this fact and we include here a short proof. It is modelled along the same lines as the proof of the fact that
if the canonical vectors form  a basis of $X$ then both duals coincide.

\begin{proposition} \label{duales}
If $X$ is a Banach sequence space, its K\"othe dual $X^{\times}$ is a 1-complemented subspace of the usual dual $X^{*}$.
\end{proposition}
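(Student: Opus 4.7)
The plan is to exhibit an explicit norm-one projection from $X^{*}$ onto a copy of $X^{\times}$ sitting inside $X^{*}$. The construction mimics the standard proof that when the canonical vectors form a Schauder basis of $X$ one has $X^{*}=X^{\times}$, except that here one cannot rely on convergence of the series $\sum x_{n}e_{n}$ in $X$ and must work throughout with the finite truncations $x^{N}$.

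First I would check that the natural map $\iota:X^{\times}\to X^{*}$ sending $z=(z_{n})_{n}$ to the functional $\varphi_{z}(x):=\sum_{n}z_{n}x_{n}$ is a well-defined isometric embedding. The series converges absolutely by the very definition of $X^{\times}$, and the bound $|\varphi_{z}(x)|\leq \sum_{n}|z_{n}x_{n}|\leq \|z\|_{X^{\times}}\|x\|_{X}$ gives $\|\varphi_{z}\|_{X^{*}}\leq \|z\|_{X^{\times}}$. For the reverse inequality, given $x\in X$ I pick scalars $\varepsilon_{n}$ of modulus $1$ with $\varepsilon_{n}z_{n}x_{n}=|z_{n}x_{n}|$ and set $y:=(\varepsilon_{n}x_{n})_{n}$; the lattice property of $X$ forces $\|y\|_{X}=\|x\|_{X}$, and then $\varphi_{z}(y^{N})=\sum_{n=1}^{N}|z_{n}x_{n}|$ passes to the limit in $N$ to yield $\sum_{n}|z_{n}x_{n}|\leq \|\varphi_{z}\|_{X^{*}}\|x\|_{X}$, hence $\|z\|_{X^{\times}}\leq \|\varphi_{z}\|_{X^{*}}$.

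Next I would build the projection $P:X^{*}\to X^{\times}$ by the obvious recipe $P(\varphi):=(\varphi(e_{n}))_{n}$. The crucial step is to show $P(\varphi)\in X^{\times}$ with $\|P(\varphi)\|_{X^{\times}}\leq \|\varphi\|_{X^{*}}$. Fix $\varphi\in X^{*}$ and $x\in X$ with $\|x\|_{X}\leq 1$; for every $N$ choose signs $\varepsilon_{n}$ such that $\varepsilon_{n}\varphi(e_{n})x_{n}=|\varphi(e_{n})x_{n}|$. The element $w_{N}:=\sum_{n=1}^{N}\varepsilon_{n}x_{n}e_{n}$ belongs to $X_{N}\subset X$ and satisfies $|w_{N}(k)|\leq |x_{k}|$ for all $k$, so $\|w_{N}\|_{X}\leq 1$ by the Köthe property. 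Therefore
\[
\sum_{n=1}^{N}|\varphi(e_{n})x_{n}| \;=\; \varphi(w_{N}) \;\leq\; \|\varphi\|_{X^{*}},
\]
letting $N\to\infty$ gives $\sum_{n}|\varphi(e_{n})x_{n}|\leq \|\varphi\|_{X^{*}}$, which is exactly what is needed. Linearity of $P$ is obvious.

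Finally I would verify $P\circ\iota=\mathrm{id}_{X^{\times}}$: for $z\in X^{\times}$, $\iota(z)(e_{n})=z_{n}$, so $P(\iota(z))=z$. Combined with $\|P\|\leq 1$ and the isometric embedding above, this identifies $X^{\times}$ with the range of a norm-one projection on $X^{*}$. The only delicate point is the one handled in the previous paragraph: since the canonical vectors need not form a Schauder basis of $X$ (think $X=\ell_{\infty}$), one cannot reconstruct $\varphi$ from its values on the $e_{n}$ and must instead estimate the Köthe-dual norm of $(\varphi(e_{n}))_{n}$ purely through finite truncations.
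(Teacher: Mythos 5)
Your proposal is correct and follows essentially the same route as the paper: the same isometric embedding $z\mapsto\varphi_z$, the same candidate projection $\varphi\mapsto(\varphi(e_n))_n$, and the same sign-adjustment/truncation argument using the lattice property to get the norm-one estimates. The only (harmless) cosmetic difference is that you run the isometry estimate through the truncations $y^N$ while the paper writes the sup directly over $B_X$.
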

\begin{proof} Let us see first that the mapping
$i: X^{\times} \rightarrow X^{*}$ defined by $i(z)=\varphi_{z}:X \rightarrow \mathbb{K}$, with $\varphi_{z}(x)=\sum_{n} z_{n} x_{n}$, is an isometry. It is clearly
well defined; moreover
\[
 \Vert \varphi_{z} \Vert = \sup_{x \in B_{X}} \Big\vert \sum_{n} z_{n} x_{n} \Big\vert \leq  \sup_{x \in B_{X}}  \sum_{n} \vert z_{n} x_{n} \vert
=\Vert z \Vert_{X^{\times}} \, .
\]
To see the reverse inequality, for any $t,s \in \mathbb{K}$ we take $\varepsilon (t,s) \in \mathbb{K}$ with $\vert \varepsilon (t,s) \vert =1$ such that
$\vert s t \vert = \varepsilon (t,s) s t$; then for every $x \in B_{X}$ and every $z \in X^{\times}$ we have
\[
 \sum_{n} \vert z_{n} x_{n} \vert = \sum_{n} \varepsilon ( z_{n}, x_{n}) z_{n} x_{n} = \Big\vert  \sum_{n} \varepsilon ( z_{n}, x_{n}) z_{n} x_{n} \Big\vert
\leq \sup_{a \in B_{X}}  \Big\vert  \sum_{n} z_{n} a_{n} \Big\vert = \Vert \varphi_{z} \Vert \, ,
\]
which gives $\Vert z \Vert_{X^{\times}} \leq \Vert \varphi_{z} \Vert$.

On the other hand, the mapping $q : X^{*} \rightarrow X^{\times}$ given by $q(\varphi) =\big( \varphi (e_{n}) \big)_{n}$ defines a norm-one projection. Indeed,
given $x \in X$ and fixed $N$ we have
\begin{multline*}
 \sum_{n=1}^{N} \vert x_{n} \varphi(e_{n}) \vert =  \sum_{n=1}^{N} \varepsilon (x_{n}, \varphi(e_{n}) ) x_{n} \varphi(e_{n})
= \varphi \Big( \sum_{n=1}^{N} \varepsilon (x_{n}, \varphi(e_{n}) ) x_{n} e_{n} \Big)  \\
\leq \Vert \varphi \Vert \  \Big\Vert \sum_{n=1}^{N} \varepsilon (x_{n}, \varphi(e_{n}) ) x_{n} e_{n} \Big\Vert_{X}
\leq \Vert \varphi \Vert \  \Vert x \Vert \, .
\end{multline*}
This shows that $\sum_{n=1}^{\infty} \vert x_{n} \varphi(e_{n}) \vert \leq  \Vert \varphi \Vert \  \Vert x \Vert$, which gives that $q$ is well defined and $\Vert q (\varphi) \Vert
\leq \Vert \varphi \Vert$. Furthermore, $q$ is a projection, since clearly $q \circ i (z) = q(\varphi_{z}) = (z_{n})_{n}=z$.
\end{proof}

\section{Extension of bilinear forms on Banach sequence spaces}\label{seccion-todos extendibles}
In what follows  $K_{G}$ denotes the  Grothendieck's constant. We begin by proving the following known fact, which was stated as Theorem 3.4 in \cite{JaPaPGVi07}
without the estimates for the norms (see \cite[Lemma 2.4]{CaGaJa01} for a result in the same spirit).

\begin{proposition} \label{sumantes}
If every  bilinear form $B:X \times Y \to \mathbb{K}$
is extendible with $\| B  \|_{\mathcal E}\le K\| B \|$, then every operator $u:X^{*} \to \ell_{2}$ 
is absolutely $1$-summing and
$\pi_{1}(u) \leq K_{G} K \Vert u \Vert$.
\end{proposition}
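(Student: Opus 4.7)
The plan is to use the hypothesis (applied with $Y=\ell_\infty^n$) to obtain a \emph{simultaneous} Hahn-Banach extension of any finite family in $X^*$ to $\ell_\infty(I)^*$ with controlled weak $\ell_1$-norm, and then to deduce the $1$-summing estimate via Grothendieck's theorem for operators from an $L_1$-space into $\ell_2$.

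Concretely, fix $u:X^*\to\ell_2$, a finite sequence $\phi_1,\dots,\phi_n\in X^*$, and an isometric inclusion $X\hookrightarrow\ell_\infty(I)$, with dual quotient $q:\ell_\infty(I)^*\to X^*$. I would encode the weak $\ell_1$-norm $\sup_{x\in B_X}\sum_i|\phi_i(x)|$ as the norm of the operator $v:X\to\ell_1^n$, $v(x)=(\phi_i(x))_i$, equivalently of the bilinear form $B(x,a)=\sum_i a_i\phi_i(x)$ on $X\times\ell_\infty^n$. Applying the hypothesis with $Y=\ell_\infty^n$ yields (up to arbitrarily small slack) an extension $\tilde B:\ell_\infty(I)\times\ell_\infty^n\to\mathbb{K}$ of norm at most $K\|B\|$, hence an operator $\tilde v:\ell_\infty(I)\to\ell_1^n$ extending $v$ with $\|\tilde v\|\le K\|v\|$. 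Its coordinate functionals $\psi_i\in\ell_\infty(I)^*$ restrict to $\phi_i$ on $X$ and satisfy
\[
\sup_{y\in B_{\ell_\infty(I)}}\sum_i|\psi_i(y)|=\|\tilde v\|\le K\sup_{x\in B_X}\sum_i|\phi_i(x)|.
\]

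Since $\ell_\infty(I)=C(\beta I)$, its dual is an abstract $L_1$-space, so Grothendieck's theorem applied to $u\circ q:\ell_\infty(I)^*\to\ell_2$ gives $\pi_1(u\circ q)\le K_G\|u\circ q\|\le K_G\|u\|$. Using $u(\phi_i)=(u\circ q)(\psi_i)$ together with Goldstine---which identifies the weak $\ell_1$-norm of $(\psi_i)$ inside $\ell_\infty(I)^*$ with the supremum above over $y\in B_{\ell_\infty(I)}$---the $1$-summing inequality yields
\[
\sum_i\|u(\phi_i)\|\le\pi_1(u\circ q)\sup_{y\in B_{\ell_\infty(I)}}\sum_i|\psi_i(y)|\le K_GK\|u\|\sup_{x\in B_X}\sum_i|\phi_i(x)|,
\]
and taking the supremum over finite sequences $(\phi_i)\subset X^*$ delivers $\pi_1(u)\le K_GK\|u\|$.

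The crux is the first step: the hypothesis is precisely what is needed to lift the family $(\phi_i)$ to $(\psi_i)\subset\ell_\infty(I)^*$ without the weak $\ell_1$-norm growing by more than a factor $K$. Without this control, the best one could extract from $u\circ q$ alone would be the weaker $2$-summing estimate $\pi_2(u\circ q)\le K_G\|u\|$ coming from the `little' Grothendieck theorem. The $L_1\to\ell_2$ form of Grothendieck's theorem and the Goldstine identification are standard; the only technical care needed is in the bookkeeping, to see that the two constants combine exactly into $K_GK$.
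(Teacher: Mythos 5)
Your second half is sound: once you have extensions $\psi_i\in\ell_\infty(I)^*$ of the $\phi_i$ with $\sup_{y\in B_{\ell_\infty(I)}}\sum_i|\psi_i(y)|\le K\sup_{x\in B_X}\sum_i|\phi_i(x)|$, the identification of $\ell_\infty(I)^*$ with an $L_1(\mu)$, Grothendieck's theorem in the form $\pi_1(w)\le K_G\|w\|$ for $w:L_1(\mu)\to\ell_2$, and the weak-$\ell_1$ computation via Goldstine do combine to give exactly $\pi_1(u)\le K_GK\|u\|$. The problem is the first step. The hypothesis of the proposition concerns one \emph{fixed} pair $(X,Y)$ with $Y$ an arbitrary infinite-dimensional space (this is how it is invoked in Theorem~\ref{extens con base}: $Y$ could be $\ell_2$, say). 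You cannot ``apply the hypothesis with $Y=\ell_\infty^n$'': that is a different pair, and uniform extendibility of bilinear forms on $X\times\ell_\infty^n$ --- equivalently, the uniform extension property for operators $X\to\ell_1^n$ that your argument needs --- does not follow from extendibility on $X\times Y$ unless the $\ell_\infty^n$ sit uniformly inside $Y$ (being $1$-injective, they would then be uniformly complemented and the transfer would work). But a general infinite-dimensional $Y$, e.g.\ any space of finite cotype such as $\ell_2$, contains no uniform copies of $\ell_\infty^n$, so this step has no justification and the gap is essential rather than cosmetic.

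This is precisely the difficulty the paper's proof is built around: the only finite-dimensional structure guaranteed in \emph{every} infinite-dimensional $Y$ is $\ell_2^N$, via Dvoretzky's theorem, and since Dvoretzky provides subspaces rather than complemented subspaces, the paper restricts the inequality $\pi\backslash\le K\,/\pi\backslash$ from $X\otimes Y$ to $X\otimes\ell_2^N$ using the \emph{right-injectivity} of both tensor norms; Grothendieck's inequality then enters as the tensor-norm estimate $w_2\ge K_G\,/\pi\backslash$ (comparing $\mathcal L(X^*;\ell_2^N)=\Delta_2(X^*;\ell_2^N)$ with $\Pi_1(X^*;\ell_2^N)$ after taking biduals), rather than as the $L_1\to\ell_2$ theorem you use. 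Your argument, as written, is a clean and genuinely more elementary proof in the special case where $Y$ does contain the $\ell_\infty^n$ uniformly (for instance $Y=c_0$ or any $\mathcal L_\infty$-space), but it does not prove the proposition in the generality in which the paper needs it. To repair it you would have to replace the target $\ell_\infty^n$ by $\ell_2^N$, at which point the weak-$\ell_1$ lifting of the family $(\phi_i)$ is no longer the right object and one is pushed back toward the factorization/tensor-norm argument of the paper.
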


\begin{proof} We first note that,  by definition of the tensor norm $/\pi \backslash$ (see \cite[Section~20.7]{DeFl93}), $\mathcal{E}^2(X,Y)$ is isometrically the dual of $X\otimes_{/\pi \backslash} Y$. Then, our hypothesis is equivalent to the inequality $\pi \le K /\pi \backslash$ on $X\otimes Y$ and, as a consequence, we also have $\pi\backslash \le K /\pi \backslash$ on $X\otimes Y$. Since both $\pi\backslash$ and $ /\pi \backslash$ are right-injective, an application of Dvoretzky's theorem \cite[19.1]{DiJaTo95} and the previous inequality gives an isomorphism
\begin{equation}\label{pi con palitos}
X\otimes_{/\pi\backslash} \ell_2^N \longrightarrow X\otimes_{\pi\backslash} \ell_2^N
\end{equation}
with norm at most $K$. Since $\ell_2^N$ is finite dimensional,  $\mathcal{L}(X;\ell_{2}^{N})$ and $X^* \otimes \ell_{2}^{N}$
coincide as sets. Then, the embedding in \cite[Section~17.6]{DeFl93} is actually surjective and \cite[Sections~21.5 and 27.2]{DeFl93} give
$X^* \otimes_{w_{\infty}} \ell_{2}^{N} \stackrel{1}{\approx} \Gamma_{\infty}(X;\ell_{2}^{N})$ (see \cite[Chapter~9]{DiJaTo95} or \cite[Section~18]{DeFl93} for the definition of $\Gamma_{\infty}(X;Y)$). Therefore,
\begin{equation}\label{biduales tensores}
\big(X\otimes_{\pi\backslash} \ell_2^N \big)^{**}\stackrel{1}{\approx}\big( \Gamma_{\infty} (X,\ell_2^N) \big) ^{*} \stackrel{1}{\approx}\big(X^*\otimes_{w_{\infty}} \ell_2^N \big)^{*}
\stackrel{1}{\approx} \Pi_{1}(X^{*};\ell_2^N) \, .
\end{equation}
Now, by \cite[Sections~17.12 and 27.2]{DeFl93}, the operator ideal $\Delta_{2}$ is associated to $w_{2}$ and $\Pi_1$ is associated to $ \pi\backslash$. On the other hand
Grothendieck's inequality \cite[Section~20.17]{DeFl93} states that $w_2 \ge K_G  /\pi \backslash$ and clearly we have  $\mathcal L(X^*;\ell_2^N)
\stackrel{1}{\approx} \Delta_2(X^*;\ell_2^N)$. Using \eqref{biduales tensores} to take biduals in \eqref{pi con palitos} we have an isomorphism
\begin{equation}\label{pi con palitos bidual}
\mathcal L(X^*;\ell_2^N)\longrightarrow  \big(X\otimes_{/\pi\backslash}  \ell_2^N \big)^{**} \longrightarrow \big( X^{**}\otimes_{\pi\backslash} \ell_2^N \big)^{**}
\stackrel{1}{\approx} \Pi_1(X^*;\ell_2^N),
\end{equation}
where the first mapping has norm bounded by $K_G$ and the second one by  $K$. Since both $\mathcal L$ and $\Pi_1$ are maximal operator ideals,
the same holds if we put $\ell_2$ instead of $\ell_2^N$.
\end{proof}

\medskip

\noindent With this result we can now prove the following one, from which Theorem~\ref{main1} follows as an immediate consequence.

\begin{theorem} \label{extens con base}
 Let $X$ be a Banach space with an unconditional basis and $Y$ be any infinite dimensional Banach space such that every bilinear form on $X \times Y$ is extendible.
Then $X \approx c_0$.
\end{theorem}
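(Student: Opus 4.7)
The starting point is to quantify the extendibility hypothesis. The natural inclusion $\mathcal L^{2}(X,Y)\subseteq\mathcal E^{2}(X,Y)$ is the identity between Banach spaces and $\|\cdot\|_{\mathcal E}\ge\|\cdot\|$, so its graph is closed; the closed graph theorem yields $K>0$ with $\|B\|_{\mathcal E}\le K\|B\|$ for every bilinear form $B$ on $X\times Y$. Proposition~\ref{sumantes} then gives $\pi_{1}(u)\le K_{G}K\|u\|$ for every operator $u\colon X^{*}\to\ell_{2}$.

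After an equivalent renorming, assume the unconditional basis $(e_{n})$ of $X$ is normalized and $1$-unconditional; then $X$ is a Banach sequence space with isometric, $1$-complemented sections $X_{N}$, and the biorthogonal sequence $(e_{n}^{*})$ is a normalized $1$-unconditional basis of each $X_{N}^{*}$. Factoring a given $u\colon X_{N}^{*}\to\ell_{2}^{N}$ through the isometric embedding $\pi_{N}^{*}\colon X_{N}^{*}\hookrightarrow X^{*}$ (whose norm-one left inverse is $i_{N}^{*}$) shows that the summing bound descends to each section: $\pi_{1}(u)\le K_{G}K\|u\|$ for every $u\colon X_{N}^{*}\to\ell_{2}^{N}$, with the constant independent of $N$.

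The aim from here is to establish the uniform local identification $X_{N}\stackrel{C}{\approx}\ell_{\infty}^{N}$. Combined with the fact that $(e_{n})$ is a Schauder basis of $X$ (so coefficients of every $x\in X$ tend to zero), this would force the norm of $X$ to be uniformly equivalent to the supremum of the coefficients, whence $X\approx c_{0}$. A first scalar consequence is obtained by testing the $1$-summing inequality for the adjoint $\mathrm{id}^{*}\colon X_{N}^{*}\to\ell_{2}^{N}$, $\varphi\mapsto(\varphi(e_{i}))_{i}$, of norm $\lambda_{N}:=\sup_{\|\alpha\|_{2}\le1}\|\sum_{i}\alpha_{i}e_{i}\|_{X}$, against the unit vectors $e_{1}^{*},\dots,e_{N}^{*}$: since $\mathrm{id}^{*}(e_{i}^{*})$ is the $i$-th canonical vector of $\ell_{2}^{N}$ and, by $1$-unconditionality, $\sup_{x\in B_{X_{N}}}\sum_{i}|x_{i}|=\|\sum_{i}e_{i}^{*}\|_{X_{N}^{*}}$, one obtains the necessary estimate
\[
N\;\le\;K_{G}K\,\lambda_{N}\,\Big\|\sum_{i=1}^{N}e_{i}^{*}\Big\|_{X_{N}^{*}},
\]
which already excludes all $\ell_{p}$-type behaviour with $p<\infty$ in the sections.

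The technical heart of the proof, and what I view as the main obstacle, is upgrading this single scalar inequality to a genuine uniform isomorphism $X_{N}^{*}\approx\ell_{1}^{N}$ (equivalently $X_{N}\approx\ell_{\infty}^{N}$). This is a local version of the principle \emph{``GT property plus an unconditional basis forces $\ell_{1}$''}. I would replay the summing estimate against arbitrary disjointly supported families in $X_{N}^{*}$, exploit the $1$-unconditional lattice structure through sign-averaging, and combine with the Hilbert-valued improvement $\pi_{2}\le K_{G}\pi_{1}$ provided by Grothendieck's inequality, in order to extract a uniform $1$-concavity constant for the lattice $X_{N}^{*}$ — which is equivalent to uniform $\ell_{1}^{N}$-isomorphism. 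Dualising the sections and passing to the limit in $N$ through the Schauder basis structure then delivers $X\approx c_{0}$.
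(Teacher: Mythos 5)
Your setup is sound and goes further than a mere restatement: the closed--graph argument for the uniform constant $K$, the application of Proposition~\ref{sumantes}, the reduction to a $1$-unconditional basis, the descent of the bound $\pi_{1}(u)\le K_G K\|u\|$ to the finite-dimensional sections $X_N^{*}$, and the test against $e_1^{*},\dots,e_N^{*}$ are all correct. Working locally with $X_N^{*}$ is even a potentially attractive variation, since it would let you bypass the first half of the paper's proof (which shows the basis is shrinking by combining James's theorem with the Kirwan--Ryan lemma to rule out a complemented copy of $\ell_1$, so that $X^{*}$ itself has an unconditional basis).

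However, there is a genuine gap exactly where you locate ``the technical heart'': the passage from \emph{every operator $X_N^{*}\to\ell_2^{N}$ is $1$-summing with uniform constant} to \emph{the coordinate basis of $X_N^{*}$ is uniformly equivalent to that of $\ell_1^{N}$} is asserted, not proved. This implication is precisely the Lindenstrauss--Pe{\l}czy\'nski theorem (that $\mathcal L(Z;\ell_2)=\Pi_1(Z;\ell_2)$ for a space $Z$ with unconditional basis forces the basis to be equivalent to the $\ell_1$-basis), which the paper invokes directly via \cite{LiPe68} and \cite[Theorem 8.21]{Pi86}; it is a substantive result whose proof is not a routine ``sign-averaging'' manipulation. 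Your single scalar inequality $N\le K_GK\,\lambda_N\|\sum_i e_i^{*}\|$ is far from sufficient (it only constrains one pair of norms), and the sketched upgrade is not carried out; moreover the inequality $\pi_2\le K_G\pi_1$ you cite as the ``Hilbert-valued improvement'' is trivially true in the wrong direction and does no work (the reverse inequality fails in general, e.g.\ for $C(K)\to L_2$). To close the argument you must either prove a quantitative local version of Lindenstrauss--Pe{\l}czy\'nski for the lattices $X_N^{*}$, or do as the paper does: first show the basis is shrinking and then apply the cited theorem to $X^{*}$ globally. As it stands, the central implication of the proof is missing.
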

\begin{proof}
Let us see first that, under our assumptions, the basis of $X$ must be shrinking. Suppose it is not. Since it is unconditional,
by James theorem \cite[Corollary~2]{Ja50} (see also \cite[Theorem~3.3.1]{AlKa06}) $X$ must contain a complemented copy of $\ell_1$. Since the property of all bilinear forms
being extendible is inherited by complemented subspaces, it follows that  every bilinear form on $\ell_1\times Y$ is extendible.
This implies \cite[Lemma~6]{KiRy98} that every continuous linear operator from $Y$ to $\ell_\infty$ is absolutely 2-summing. By the so called $\mathfrak L_p$-Local Technique Lemma for Operator
Ideals~\cite[Section~23.1]{DeFl93}, the same holds for every operator from $Y$ to $\ell_\infty(I)$, for any index set $I$.
But this is not possible, since there exists an isometric embedding from $Y$
into some $\ell_\infty(I)$, and this cannot be absolutely 2-summing (otherwise,  the identity on $Y$ would be so, but $Y$ is infinite dimensional).

This means that the
canonical basis of $X$ must be shrinking. We can assume that the basis is 1-unconditional, so that the coordinate basis is an
1-unconditional basis of $X^{*}$. We also know from Proposition~\ref{sumantes} that all operators from $X^*$ to $\ell_2$ are absolutely 1-summing.
By \cite{LiPe68} (see also \cite[Theorem 8.21]{Pi86}) this  implies that the basis of $X^{*}$ is $(K_{G} K)^2$-equivalent to the basis of $\ell_{1}$.
Although $\ell_1$ has many non-isomorphic preduals, if the coordinate basis is equivalent to that of $\ell_1$, a standard computation shows that the canonical basis on $X$ must be $(K_{G} K)^2$-equivalent to the basis of $c_{0}$.
\end{proof}

Note that the proof not only shows that $X$ must be isomorphic to $c_0$, but also gives an estimation of the Banach-Mazur distance between $X$ and $c_0$ whenever $X$ has an 1-unconditional basis.
As a consequence, we can also characterize the pairs of Banach sequence spaces on which every bilinear form is extendible.
\begin{corollary}\label{equivalencias-extendibles}
If $X$ and $Y$ are Banach sequence spaces, then the following are equivalent.
\begin{enumerate}
 \item[(i)] $\mathcal{L}^2( X,Y) = \mathcal{E}^2( X,Y)$  and $\|B\|_{\mathcal{E}} \le K_1\|B\|$ for all bilinear form $B$ on $X\times Y$.

\item[(ii)] The canonical basic sequence of   $X$ and $Y$ are $K_2$-equivalent to the canonical basis of $c_0$.

\item[(iii)] $K_3:=\sup\{d(X_N,\ell_\infty^N),d(Y_N,\ell_\infty^N) : N\in \mathbb N\}$ is finite (where $d$ denotes the Banach-Mazur distance).

\item[(iv)] The spaces $X_N$ and $Y_N$ ($N\in \mathbb N$) are $K_4$-uniformly complemented in some $L_\infty(\mu)$.

\end{enumerate}
Moreover, we have $K_4\le K_3\le K_2\le (K_G\ K_1)^2$ and $K_1 \leq K_2^2\le K_{G}^{4} K_4^8$.
\end{corollary}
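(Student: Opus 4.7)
The plan is to establish the cycle $(i)\Rightarrow(ii)\Rightarrow(iii)\Rightarrow(iv)\Rightarrow(ii)$ together with the direct implication $(ii)\Rightarrow(i)$, tracking the constants so as to recover the two chains $K_4\le K_3\le K_2\le (K_GK_1)^2$ and $K_1\le K_2^2\le K_G^4 K_4^8$. The substantive steps will be $(i)\Rightarrow(ii)$, which reduces to Theorem~\ref{extens con base}, and $(iv)\Rightarrow(ii)$, which combines Proposition~\ref{sumantes} with a quantitative Lindenstrauss--Pelczy\'nski estimate.

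For $(i)\Rightarrow(ii)$, the obstacle is that Theorem~\ref{extens con base} requires an unconditional \emph{basis}, whereas in a general Banach sequence space the canonical vectors form only a basic sequence. I first transfer the extendibility hypothesis to the minimal kernels $X_0,Y_0$, in which $\{e_n\}$ is a $1$-unconditional basis. Given $B\in\mathcal{L}^2(X_0,Y_0)$, I lift the truncations $B_N(x,y):=B(\pi_N^X x,\pi_N^Y y)$ to $X\times Y$, use (i) to produce extensions $\Phi_N$ on $\ell_\infty(I)\times\ell_\infty(J)$ of norm $\le K_1\|B\|$, and take a weak-$*$ limit $\Phi$. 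The limit has norm $\le K_1\|B\|$, agrees with $B$ on every $X_M\times Y_M$, and hence on all of $X_0\times Y_0$ by continuity. Applying Theorem~\ref{extens con base} to $X_0$ (and, by symmetry, to $Y_0$) then yields $K_2\le(K_GK_1)^2$.

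The implications $(ii)\Rightarrow(iii)$ and $(iii)\Rightarrow(iv)$ are immediate: the basis equivalence restricts to give $d(X_N,\ell_\infty^N)\le K_2$, and $\ell_\infty^N$ is canonically $1$-complemented in $\ell_\infty$, so $K_4\le K_3\le K_2$. The crucial implication is $(iv)\Rightarrow(ii)$. The $K_4$-complementation of $X_N,Y_N$ in some $L_\infty(\mu)$ lets me extend any bilinear form on $X_N\times Y_N$ to $L_\infty(\mu)\times L_\infty(\mu')$ by pre-composition with the two projections, with norm loss at most $K_4^2$, and then further to any pair of $\ell_\infty$-superspaces by injectivity of $L_\infty$. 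Thus bilinear forms on $X_N\times Y_N$ are extendible with constant $\le K_4^2$, and Proposition~\ref{sumantes} applied to $X_N\times Y_N$ gives $\pi_1(u)\le K_GK_4^2\,\|u\|$ for every operator $u:X_N^*\to\ell_2$. The quantitative Lindenstrauss--Pelczy\'nski theorem, as formulated in \cite[Theorem 8.21]{Pi86}, applied to the $1$-unconditional canonical basis of $X_N^*$, shows this basis to be $(K_GK_4^2)^2$-equivalent to the $\ell_1^N$-basis; dualizing and letting $N\to\infty$ yields $K_2\le K_G^2K_4^4$, i.e., $K_2^2\le K_G^4K_4^8$.

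Finally, for $(ii)\Rightarrow(i)$ with $K_1\le K_2^2$, I argue at the tensor-product level. On $\ell_\infty^N\otimes\ell_\infty^N$ the projective and greatest-injective tensor norms coincide isometrically, since every bilinear form on $\ell_\infty^N\times\ell_\infty^N$ extends to any pair of $\ell_\infty$-spaces without loss of norm by injectivity. Transferring through the isomorphisms $X_N\cong_{K_2}\ell_\infty^N$, $Y_N\cong_{K_2}\ell_\infty^N$ gives $\pi\le K_2^2\,/\pi\backslash$ on each section $X_N\otimes Y_N$. Because $/\pi\backslash$ is two-sided injective, its value on $X_N\otimes Y_N$ equals its value on $X\otimes Y$, while $\pi$ on $X\otimes Y$ is no larger than on the sections; since every element of the algebraic tensor product is approximable by elements of sections, the estimate extends to all of $X\otimes Y$ and dualizes to $\|B\|_{\mathcal{E}}\le K_2^2\|B\|$ for every $B\in\mathcal{L}^2(X,Y)$. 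The main obstacle will be the careful constant-tracking in $(iv)\Rightarrow(ii)$, in particular through the quantitative Lindenstrauss--Pelczy\'nski step combined with the finite-dimensional version of Proposition~\ref{sumantes}.
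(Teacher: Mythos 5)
Your overall architecture (the cycle $(i)\Rightarrow(ii)\Rightarrow(iii)\Rightarrow(iv)\Rightarrow(ii)$ plus $(ii)\Rightarrow(i)$) matches the paper's, and your $(i)\Rightarrow(ii)$ — truncating, extending, and taking weak-$*$ limits to transfer the hypothesis to $X_0\times Y_0$ before invoking Theorem~\ref{extens con base} — is a correct hands-on substitute for the paper's appeal to the density lemma. However, there are two genuine gaps.

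The serious one is in $(ii)\Rightarrow(i)$. Your argument hinges on the claim that every element of the algebraic tensor product $X\otimes Y$ is approximable by elements of the sections $X_N\otimes Y_N$. This is false for a general Banach sequence space: condition (ii) only says that the $e_n$ form a basic \emph{sequence} equivalent to the $c_0$-basis, and $X$ may be strictly larger than $X_0=\overline{\spanned}\{e_n\}$ --- for instance $X=\ell_\infty$ or $c_0\oplus\ell_\infty$, both of which satisfy (ii). For $x=(1,1,1,\dots)\in\ell_\infty$ one has $\|x-\pi_N^X x\|_\infty=1$ for every $N$, so an elementary tensor $x\otimes y$ is not approximable from $\bigcup_N X_N\otimes Y_N$. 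Your tensor-norm estimate therefore only yields $\pi\le K_2^2\,/\pi\backslash$ on $X_0\otimes Y_0$, i.e.\ extendibility of bilinear forms on $X_0\times Y_0$, not on $X\times Y$. The paper closes exactly this gap by a different device: (ii) forces $X^\times=\ell_1$, hence $X^{\times\times}=\ell_\infty$, and Proposition~\ref{duales} yields an isometric embedding of $X$ into $X^{\times\times}\approx\ell_\infty$ compatible with $X\hookrightarrow X^{**}$; one then extends $B$ canonically (Arens/Aron--Berner) to $X^{**}\times Y^{**}$ with the same norm, restricts to $\ell_\infty\times\ell_\infty$, and uses injectivity of $\ell_\infty$ plus the ideal property to get $\|B\|_{\mathcal E}\le K_2^2\|B\|$. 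Some such passage through the (K\"othe) bidual is unavoidable here.

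The second gap is in $(iv)\Rightarrow(ii)$: you apply Proposition~\ref{sumantes} to the finite-dimensional pair $X_N\times Y_N$, but that proposition is false without an infinite-dimensionality assumption on the second factor (for $Y=\mathbb{K}$ its hypothesis holds with $K=1$ for \emph{every} $X$ by Hahn--Banach, while its conclusion fails for $X^*=\ell_2$), and its proof uses Dvoretzky's theorem to place $\ell_2^M$ for every $M$ almost isometrically inside $Y$. Likewise \cite[Theorem 8.21]{Pi86} is a statement about infinite-dimensional spaces. One can repair this either by exploiting the uniformity of (iv) over all pairs $(X_{N},Y_{M})$ and letting $M\to\infty$ for fixed $N$, or, more simply, by doing what you already did in $(i)\Rightarrow(ii)$ and what the paper does: pass from uniform extendibility on the sections to extendibility on $X_0\times Y_0$ and then apply Theorem~\ref{extens con base} to the infinite-dimensional minimal kernels, which gives $K_2\le K_G^2K_4^4$ as claimed.
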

\begin{proof}
If (i) holds on $X\times Y$, then the same holds for $X_N\times Y_N$ for any $N$ and, by the density lemma \cite[Section~13.4]{DeFl93}, for $X_0\times Y_0$.  By Theorem~\ref{extens con base},
both bases are $(K_{G}K)^{2}$-equivalent to the basis of $c_{0}$.

The implications
(ii) $\Rightarrow$ (iii) $\Rightarrow$ (iv) are immediate, as well as the inequalities $K_4\leq K_3 \le  K_2$.

If (iv) holds,  bilinear forms on  $X_{N} \times Y_{N}$ are extendible with  $\Vert \cdot \Vert_{\mathcal{E}} \leq K_{4}^{2} \Vert \cdot \Vert $ and, as before, the same holds for $X_{0} \times Y_{0}$. By Theorem~\ref{extens con base} their canonical
bases are $K_G^2K_4^4$-equivalent to the canonical basis of $c_{0}$, which is (ii).

Now suppose (ii) holds and take a bilinear form  $B : X \times Y \rightarrow \mathbb{K}$.
We know from Proposition~\ref{duales} that $X^{\times}$ is 1-complemented in $X^{*}$. Then $(X^{\times})^{*}$ is isometrically a (complemented)
subspace of $X^{**}$. Since (ii) implies $X^\times=\ell_1$, we also have
$(X^{\times})^{*}=X^{\times\times} = \ell_\infty$. The same holds for $Y$, so
we obtain the following diagrams:
\[
\xymatrix{
X  \ar[r]^{i_1} &X^{\times\times}  \ar[r]^{i_2} & X^{**}\\
 & \ell_\infty \ar[u]^{u} & }
 \qquad
\xymatrix{
Y  \ar[r]^{j_1} &Y^{\times\times}  \ar[r]^{j_2} & Y^{**}\\
 & \ell_\infty \ar[u]^{v} & } ,
\]
where $i_1$, $i_2$, $j_1$ and $j_2$ are isometric injections and $u$ and $v$ are isomorphisms with
\begin{equation} \label{uyualamenosuno}
\|u\|\ \|u^{-1}\|\le K_2 \quad\text{ and }\|v\|\ \|v^{-1}\|\le K_2.
\end{equation}
 We can extend $B$ (in the canonical way) to a bilinear form $\tilde{B} : X^{**}\times Y^{**} \rightarrow \mathbb{K}$
with the same norm as $B$, and then define a bilinear form  $\widehat B$ on $\ell_\infty\times \ell_\infty$  by $\widehat B=\tilde B\circ ( i_2 \circ u,  j_2\circ v)$. We have obtained
the factorization $B=\widehat B\circ(u^{-1}\circ i_1,v^{-1}\circ j_1)$.
Since on $\ell_{\infty} \times \ell_{\infty}$ every bilinear form is extendible (with the extendible norm equal to the usual norm), from the ideal property of extendible bilinear
forms and inequalities~\eqref{uyualamenosuno} we conclude that $B$ is extendible and $\|B\|_{\mathcal{E}} \le K_2^2\|B\|$.
\end{proof}

It follows from the previous corollary (and its proof) that a Banach sequence space on which every bilinear form is extendible must satisfy the sublattice inclusions
\begin{equation}\label{sublattice}
c_0\subset X \subset \ell_\infty .
\end{equation}
Conversely, let $X$ be a Banach sequence space satisfying \eqref{sublattice}. By a closed graph argument, both inclusions are continuous and
it is easy to check that $X$ satisfies the equivalent conditions of Corollary~\ref{equivalencias-extendibles}. Note also that a Banach sequence space satisfies \eqref{sublattice}
if and only if its K\"othe dual is $\ell_1$.
As a consequence, we have the following version of Theorem~\ref{main1} for Banach sequence spaces.
\begin{corollary} \label{ultimo}
The Banach sequence spaces $X$ on which every bilinear form is extendible are those satisfying \eqref{sublattice}.
Also, this happens if and only if $X^{\times} = \ell_{1}$.
\end{corollary}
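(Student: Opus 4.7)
The plan is to specialize Corollary~\ref{equivalencias-extendibles} to $Y = X$ and then translate its condition (ii) first into the sublattice inclusions $c_0 \subset X \subset \ell_\infty$, and next into the K\"othe dual identity $X^\times = \ell_1$. There are essentially three implications to check: the characterization of extendibility via the canonical basis being equivalent to that of $c_0$, the passage between this basis equivalence and the sublattice condition, and finally the passage between the sublattice condition and the K\"othe dual statement.

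For the forward direction, suppose every bilinear form on $X \times X$ is extendible. Corollary~\ref{equivalencias-extendibles} (applied with $Y=X$) gives that the canonical basic sequence of $X$ is equivalent to the canonical basis of $c_0$. Since by definition of Banach sequence space $X \subset \ell_\infty$ with norm-one inclusion, we have $\|x\|_\infty \le \|x\|_X \le K_2 \|x\|_\infty$ on $\spanned\{e_n\}$. Taking the closure of the finitely supported vectors inside $X$ (which is $X_0$) and inside $\ell_\infty$ (which is $c_0$) identifies $X_0$ with $c_0$ under equivalent norms, so $c_0 = X_0 \subseteq X \subseteq \ell_\infty$.

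For the converse, suppose $c_0 \subset X \subset \ell_\infty$. A standard closed graph argument (coordinate evaluations are continuous in each norm, so the inclusions have closed graphs) shows both inclusions are continuous. This yields constants with $\|x\|_X \le C_1 \|x\|_\infty$ on $c_0$ and $\|x\|_\infty \le C_2 \|x\|_X$ on $X$, so the canonical basic sequence of $X$ is equivalent to the basis of $c_0$. Condition (ii) of Corollary~\ref{equivalencias-extendibles} (with $Y=X$) is satisfied, so every bilinear form on $X \times X$ is extendible.

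For the K\"othe dual characterization: if $c_0 \subset X \subset \ell_\infty$, K\"othe duality reverses the inclusions and sandwiches $X^\times$ between $\ell_1 = \ell_\infty^\times$ and $c_0^\times = \ell_1$, so $X^\times = \ell_1$. Conversely, if $X^\times = \ell_1$, the canonical isometric embedding $X \hookrightarrow X^{\times\times} = \ell_1^\times = \ell_\infty$ gives $X \subseteq \ell_\infty$ with the $X$-norm equal to the sup-norm on $X$; arguing as in the first paragraph, the closure in $X$ of the finitely supported vectors is $c_0$, so $c_0 = X_0 \subseteq X$. The only slightly delicate point is the identification $X_0 = c_0$, which in each case reduces to checking that the $X$-norm is equivalent (or equal) to the sup-norm on finitely supported sequences.
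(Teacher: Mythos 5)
Most of your argument coincides with the paper's route (everything is funnelled through Corollary~\ref{equivalencias-extendibles} with $Y=X$, plus a closed graph argument for the converse), and those parts are fine. The genuine problem is your justification of the implication $X^{\times}=\ell_1 \Rightarrow c_0\subset X$. You argue that the canonical embedding $X\hookrightarrow X^{\times\times}=\ell_\infty$ is isometric, so that ``the $X$-norm equals the sup-norm on $X$.'' Neither half of this is available. For a Banach sequence space as defined here the embedding into the K\"othe bidual is only a norm-one injection (isometry requires a Fatou-type property, which is not assumed: $\|x\|=\max\bigl(\|x\|_\infty,\,2\limsup_n|x_n|\bigr)$ defines a Banach sequence space equal to $\ell_\infty$ as a set, with $X^\times=\ell_1$ isometrically, yet $\|(1,1,\dots)\|_X=2\neq 1=\|(1,1,\dots)\|_{X^{\times\times}}$). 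Moreover $X^{\times}=\ell_1$ only gives $X^{\times\times}=\ell_\infty$ up to equivalent norms. What the contractive embedding actually yields is $\|x\|_\infty\preceq\|x\|_X$, which is the trivial direction; the inequality you need in order to identify $X_0$ with $c_0$ is the reverse one, $\|x\|_X\preceq\|x\|_\infty$ on finitely supported vectors, and that does not follow from what you wrote.

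The conclusion is true and can be repaired in two ways. (1) Use Proposition~\ref{duales}: for a finitely supported $x$ pick a norming functional $\varphi\in X^{*}$ and set $z=(\varphi(e_n))_n\in X^{\times}$, so that $\|z\|_{X^{\times}}\le 1$ and $\|x\|_X=\varphi(x)=\sum_n z_nx_n\le \|x\|_{X^{\times\times}}$; hence the embedding is isometric on the finitely supported vectors, and $X^{\times\times}\approx\ell_\infty$ (closed graph) then gives $\|x\|_X\preceq\|x\|_\infty$ there, which is what you need. (2) Avoid the implication altogether by closing a cycle: the sublattice inclusions give $X^{\times}=\ell_1$ by K\"othe duality (as you show); the proof of (ii)$\Rightarrow$(i) in Corollary~\ref{equivalencias-extendibles} only uses $X^{\times}=\ell_1$, so this hypothesis already implies that every bilinear form is extendible; and extendibility gives back the sublattice inclusions via (ii), exactly as in your first paragraph. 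Either repair leaves the rest of your proof, and the statement, intact.
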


Examples of such spaces are $c_0\oplus \ell_\infty$, $c_{0}(\ell_{\infty})$ and $\ell_{\infty}(c_{0})$. It is not hard to see that these spaces are mutually non-isomorphic Banach sequence spaces (see also~\cite{CeMe10}, where the authors show that $c_{0}(\ell_{\infty})$ and $\ell_{\infty}(c_{0})$ are not isomorphic even as Banach spaces).

If a sequence $X_n$ of $n$-dimensional Banach spaces is uniformly complemented in some $\mathcal L_\infty$, it is an open problem if these spaces have to be
uniformly isomorphic to $\ell_\infty^n$. Taking $X=Y$ in Corollary~\ref{equivalencias-extendibles}, the implication (iv)$\Rightarrow$ (iii) gives the following partial answer.

\begin{proposition}\label{open problem}
If the $N$-dimensional sections $X_N$ of a Banach sequence space $X$ are uniformly complemented
in some $L_\infty(\mu)$, then they must be uniformly isomorphic to $\ell_\infty^N$. \end{proposition}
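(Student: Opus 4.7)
The plan is to observe that the proposition is an essentially immediate specialization of Corollary~\ref{equivalencias-extendibles} to the diagonal case $X=Y$. The hypothesis is that the $N$-dimensional sections $X_N$ of the Banach sequence space $X$ are uniformly complemented in some $L_\infty(\mu)$. Since $L_\infty(\mu)$ is an $\mathcal{L}_\infty$-space and, more to the point, uniform complementation of the $X_N$ alone is exactly the ``$X=Y$'' instance of condition (iv) in Corollary~\ref{equivalencias-extendibles}, we are in position to invoke that corollary directly.

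First I would set $Y:=X$, so that the pair $(X,Y)$ trivially consists of Banach sequence spaces and the stated hypothesis reads: the $X_N$ are $K$-uniformly complemented in some $L_\infty(\mu)$, which is exactly condition~(iv) of Corollary~\ref{equivalencias-extendibles} with $K_4\le K$. The corollary then yields (iii), namely that $K_3=\sup_N d(X_N,\ell_\infty^N)$ is finite and in fact bounded by $K_2\le (K_G K_1)^2$, with $K_1\le K_G^4 K^8$ by the quantitative chain at the end of the corollary. Unraveling the definition of Banach--Mazur distance, this finiteness is precisely the statement that the $X_N$ are uniformly isomorphic to $\ell_\infty^N$, which is what we wanted to conclude.

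There is essentially no obstacle at the level of this proposition itself: all the work has already been done in Corollary~\ref{equivalencias-extendibles}, whose hard direction (iv)$\Rightarrow$(ii) (and hence (iii)) passes through Theorem~\ref{extens con base} on extendibility and the Lindenstrauss--Pe\l{}czy\'nski characterization of bases equivalent to that of $\ell_1$. The only conceptual point worth emphasizing in the write-up is why this is a \emph{partial} answer to the open problem mentioned in the introduction: arbitrary uniformly complemented $n$-dimensional subspaces of $L_\infty(\mu)$ need not arise as the canonical sections of any Banach sequence space, so one cannot in general reduce to the lattice setting where the canonical basis of $c_0$ becomes available. Accordingly, the proof I would write is one short paragraph restating (iv)$\Rightarrow$(iii) with $X=Y$ and identifying the resulting finiteness of $\sup_N d(X_N,\ell_\infty^N)$ with uniform isomorphism to $\ell_\infty^N$.
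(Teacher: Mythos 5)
Your proof is correct and is exactly the paper's argument: the authors obtain the proposition by taking $X=Y$ in Corollary~\ref{equivalencias-extendibles} and reading off the implication (iv)$\Rightarrow$(iii), identifying the finiteness of $\sup_N d(X_N,\ell_\infty^N)$ with uniform isomorphism to $\ell_\infty^N$. Nothing further is needed.
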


Note also that, since $\ell_{\infty}$ and $c_{0}$ are the only symmetric Banach sequence spaces satisfying (ii) of Corollary~\ref{equivalencias-extendibles}, these two are the only symmetric Banach sequence spaces on which every bilinear form is extendible.

\bigskip

If $X_1,\dots,X_n$ are Banach spaces such that every $n$-linear form on $X_1\times\cdots\times X_n$ is extendible, then it is  known (and easy to see) that so is every
bilinear form on $X_i\times X_j$  for each pair $i\ne j$. Indeed, given $B \in \mathcal{L}^2(X_i\times X_j)$, we can multiply it by linear functionals to obtain a $n$-linear
form on $X_1\times\cdots\times X_n$. This is extendible by our hypothesis. From this, it is rather immediate to conclude that $B$ is extendible. As a consequence,
multilinear versions of our results follow directly from the bilinear ones.

\medskip

If $X$ and $Y$ are Banach sequence spaces such that every bilinear form on $X\times Y$ is extendible, then we know from
 Theorem~\ref{equivalencias-extendibles} (iii) that  both $X$ and $Y$ contain the $\ell_{\infty}^{N}$ uniformly.
We can extend this statement
to subspaces of Banach lattices.

\begin{proposition} \label{subesp comple}
Let $X_1,X_2$ be subspaces of  Banach lattices such that every $n$-linear form on $X_1,X_2$ is extendible. Then
every infinite dimensional complemented subspace of each $X_{j}$ contains the $\ell_{\infty}^{N}$ uniformly.
\end{proposition}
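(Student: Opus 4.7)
My plan is to argue by contradiction, reducing to Theorem~\ref{extens con base} via the extraction of a complemented subspace with an unconditional basis.

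First, I would verify that the bilinear extendibility property passes to complemented subspaces: if $P \colon X_1 \to Z$ is a projection and $B \colon Z \times X_2 \to \mathbb{K}$ is any bilinear form, then $(x,y) \mapsto B(Px,y)$ is a bilinear form on $X_1 \times X_2$, which is extendible by hypothesis. Restricting any such extension yields an extension of $B$, so every bilinear form on $Z \times X_2$ is extendible, and the same observation applies to the eventual smaller subspace constructed below.

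Suppose for contradiction that some infinite-dimensional complemented $Z \subseteq X_1$ does not contain the $\ell_\infty^N$ uniformly. By the Maurey--Pisier theorem, $Z$ has some finite cotype $q < \infty$. Since $Z$ sits inside a Banach lattice $L$, I would appeal to the classical theory of subspaces of Banach lattices with finite cotype (in the spirit of Kadec--Pelczynski and Figiel--Johnson--Tzafriri) to extract a normalized basic sequence $(e_n)_n \subset Z$ that, up to a sufficiently small perturbation, is equivalent to a disjointly supported sequence $(f_n)_n \subset L$. Disjointly supported normalized sequences in $L$ span complemented sublattices via the natural band projection, so a Bessaga--Pelczynski-type small perturbation argument yields that $Z_0 := \overline{\spanned}\{e_n\}_n$ is complemented in $L$; and since $Z_0 \subseteq Z \subseteq X_1$, the restriction of that projection to $Z$ (respectively $X_1$) exhibits $Z_0$ as complemented in both.

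Thus $Z_0$ is an infinite-dimensional complemented subspace of $X_1$ with an unconditional basis, and by the inheritance step every bilinear form on $Z_0 \times X_2$ is extendible. Theorem~\ref{extens con base} then forces $Z_0 \approx c_0$, so $Z_0$ (and therefore $Z$) contains the $\ell_\infty^N$ uniformly, contradicting our assumption. The main obstacle is the extraction step: rigorously producing, within an arbitrary subspace of a Banach lattice with finite cotype, an unconditional basic sequence close enough to a disjoint sequence in the lattice that its closed linear span is complemented. In full generality this typically requires first reducing to an order-continuous sublattice of $L$ containing $Z$ and then invoking the appropriate structural theorems for subspaces of order-continuous Banach lattices.
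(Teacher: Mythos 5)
Your inheritance step is fine, and the overall strategy (reduce to Theorem~\ref{extens con base}) is the right one, but the extraction step you single out as ``the main obstacle'' is not a technicality that standard lattice theory will fill in: it is false for general Banach lattices. Take $L=C[0,1]$ (or $\ell_\infty$), which is not order continuous, and let $Z$ be a copy of $\ell_2$ inside $L$ (every separable space embeds in $C[0,1]$). Then $Z$ has cotype $2$, hence does not contain the $\ell_\infty^N$ uniformly, yet every normalized disjointly supported sequence in a $C(K)$-lattice spans an isometric copy of $c_0$, and $\ell_2$ contains no copy of $c_0$; so no basic sequence in $Z$ is equivalent to, or a small perturbation of, a disjoint sequence in $L$. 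The proposed rescue --- passing to an order-continuous sublattice of $L$ containing $Z$ --- is equally unavailable: separable order-continuous sublattices of $C(K)$ embed into $c_0$, which does not contain $\ell_2$. Even in the order-continuous case, the complementation in $L$ of the closed span of a disjoint sequence is an additional non-automatic requirement. So the infinite-dimensional reduction to a complemented subspace with unconditional basis cannot be carried out, and the argument does not close.

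The paper's proof sidesteps all of this by working locally. By \cite[Corollary~1]{JoTz75}, a complemented subspace $E$ of $X_1$ that fails to contain the $\ell_\infty^N$ uniformly contains uniformly complemented $N$-dimensional subspaces uniformly isomorphic to $\ell_p^N$ with $p\in\{1,2\}$, and $X_2$ contains uniformly complemented $\ell_q^N$ with $q\in\{1,2,\infty\}$; these are then uniformly complemented in $X_1$ and $X_2$ respectively. Extendibility with a uniform constant passes to these finite-dimensional sections, and the density lemma \cite[Section~13.4]{DeFl93} upgrades this to the statement that every bilinear form on $\ell_p\times\ell_q$ (or $\ell_p\times c_0$) is extendible, contradicting Theorem~\ref{extens con base} because $\ell_p\not\approx c_0$ for $p\in\{1,2\}$. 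If you want to salvage your write-up, replace the infinite-dimensional disjointification step by this finite-dimensional (local) argument.
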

\begin{proof}
Suppose that there exists a complemented subspace $E$ of $X_1$ that does not contain the $\ell_{\infty}^{N}$ uniformly.
By \cite[Corollary~1]{JoTz75},  $E$ must contain uniformly complemented $N$-dimensional subspaces
$E_{N}$ such that $\sup_{N} d(E_{N}, \ell_{p}^{N}) < \infty$ for $p=1$ or $2$. Since $E$ is complemented in $X_1$, the $E_N$ are also uniformly complemented in $X_1$.
On the other hand, again by \cite[Corollary~1]{JoTz75}, $X_2$ must contain uniformly complemented $N$-dimensional subspaces $F_{N}$ such that  $\sup_{N} d(F_{N}, \ell_{q}^{N}) < \infty$ for $q=1, 2$ or $\infty$.
Our hypotheses ensure that bilinear forms on $X_1\times X_2$ are extendible. Since $E_N$ and $F_N$ are uniformly complemented in $X_1$ and $X_2$ and they are
(uniformly) isomorphic to $\ell_p^N$ and $\ell_q^N$, there must exist $K>0$ such that
$\|B\|_{\mathcal E^2(\ell_p^N, \ell_q^N)}\le K \|B\|_{\mathcal L^2(\ell_p^N, \ell_q^N)}$
for all $N$. Now, the density lemma \cite[Section~13.4]{DeFl93} implies that every bilinear form on $\ell_p\times \ell_q$ (or $\ell_p\times c_0$) must be extendible, which contradicts
Theorem~\ref{extens con base}.
\end{proof}

The converse of Proposition~\ref{subesp comple} does not hold. For example, the Schreier space is $c_0$-saturated and there are non-extendible bilinear forms on it (since there are bilinear forms which are not weakly sequentially continuous). Another conterexample of the converse is~$d_*(w,1)$, the predual of the Lorentz sequence space $d(w,1)$  (see \cite{Sa60} or \cite{Ga66} for a description of the predual).
Since these examples are Banach sequence spaces, they also show that assertion (iii) in
Theorem~\ref{equivalencias-extendibles}  is strictly stronger than containing $\ell_{\infty}^{N}$ uniformly.

\bigskip
In Banach sequence spaces, diagonal bilinear forms are the simplest ones. These are the bilinear forms $T_{\alpha}: X_{1} \times  X_{2} \rightarrow \mathbb{C}$ given by
\[
T_{\alpha}(x^{1}, x^{2}) = \sum_{k=1}^{\infty} \alpha_{k} x^{1}_{k}   x^{2}_{k} \, ,
\]
for some sequence $(\alpha_{k})_{k}$ of scalars.
 We end this note showing, under some assumptions, which are the spaces on which all diagonal bilinear forms are extendible. 

Following standard notation, given a symmetric Banach sequence space $X$ we consider the fundamental function of $X$, given by $\lambda_{X} (N)
:= \big\| \sum_{k=1}^{N} e_{k} \big\|_{X}$ for $N \in \mathbb{N}$.

Given two sequence of real numbers $(a_{n})_{n}$ and $(b_{n})_{n}$ we write $a_{n} \preceq b_{n}$ whenever there is a universal
constant $C>0$ such that $a_{n} \leq C b_{n}$ for every $n$. If $a_{n} \preceq b_{n}$ and $b_{n} \preceq a_{n}$, we write $a_{n} \asymp b_{n}$.

\begin{theorem} \label{2ccv 2cvx}
Let $X$ and $Y$ be symmetric Banach sequence spaces, each being $2$-convex or $2$-concave. Then
all diagonal bilinear forms on $X \times Y$ are extendible if and only if either $X=Y=\ell_{1}$ or $X,Y\in \{c_0,\ell_\infty\}$
\end{theorem}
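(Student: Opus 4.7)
The plan is to argue the two implications separately.

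\textbf{If direction.} For $X,Y\in\{c_0,\ell_\infty\}$, Corollary~\ref{ultimo} already gives that every bilinear form on $X\times Y$ is extendible. For $X=Y=\ell_1$, I would proceed by hand: given $\alpha\in\ell_\infty$, pick a norm-one Hahn--Banach extension $\phi\in(\ell_\infty)^*$ of the summing functional $s(x)=\sum_k x_k$ defined on $\ell_1$, and put $\tilde T_\alpha(\xi,\eta)=\phi(\alpha\cdot\xi\cdot\eta)$ for $(\xi,\eta)\in\ell_\infty\times\ell_\infty$ (pointwise product). This is a bilinear form of norm at most $\|\alpha\|_\infty$ which agrees with $T_\alpha$ on $\ell_1\times\ell_1$, since there $\alpha\xi\eta\in\ell_1$ and $\phi|_{\ell_1}=s$. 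The injectivity of $\ell_\infty$ lets one pull this extension along any isometric embedding $\ell_1\hookrightarrow\ell_\infty(I)$ to produce an extension in the paper's sense, so $T_\alpha$ is extendible with $\|T_\alpha\|_{\mathcal E}\le\|T_\alpha\|$.

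\textbf{Only-if direction.} Assume every diagonal bilinear form on $X\times Y$ is extendible. A standard closed-graph argument yields $C>0$ with $\|T_\alpha\|_{\mathcal E}\le C\|T_\alpha\|$ for every admissible $\alpha$. I would test on the truncations $\alpha^{(N)}=\sum_{k=1}^N e_k$. Because $X,Y$ are symmetric, the ordinary norm equals that of the canonical identification $X_N\to Y_N^\times$ and reads off the fundamental functions as
\[
\|T_{\alpha^{(N)}}\|_{X\times Y}=\sup_{1\le M\le N}\frac{M}{\lambda_X(M)\,\lambda_Y(M)}.
\]
For the extendible norm I would mimic the scheme of Proposition~\ref{sumantes}: the diagonal-extendibility hypothesis gives the tensor-norm inequality $\pi\le C\,/\pi\backslash$ \emph{restricted to the diagonal tensors} $\sum_k\beta_k e_k\otimes e_k\in X_N\otimes Y_N$, and combining this with the Dvoretzky/Grothendieck step used there---now localized to diagonals---produces a $1$-summing bound for the diagonal operator $X_N\to Y_N^\times$ associated with $\alpha^{(N)}$. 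Dualizing gives a lower bound on $\|T_{\alpha^{(N)}}\|_{\mathcal E}$ in terms of $\lambda_X(N)$ and $\lambda_Y(N)$.

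Combining the upper and lower bounds imposes an algebraic constraint on $\lambda_X(N)\lambda_Y(N)$ as a function of $N$. The $2$-convexity/$2$-concavity hypothesis now enters: a $2$-convex symmetric space satisfies $\lambda_X(N)\preceq\sqrt N$ (upper $\ell_2$-estimate on the basis), while a $2$-concave one satisfies $\lambda_X(N)\succeq\sqrt N$ (lower $\ell_2$-estimate). Splitting into four cases according to which of $X,Y$ is convex and which is concave, the ``both $2$-convex'' branch admits only $\lambda_X(N)\asymp\lambda_Y(N)\asymp 1$, forcing $X,Y\in\{c_0,\ell_\infty\}$ by symmetry together with the fact, already recorded in the paper after Corollary~\ref{ultimo}, that $c_0$ and $\ell_\infty$ are the only symmetric Banach sequence spaces with bounded fundamental function; the ``both $2$-concave'' branch only allows $\lambda_X(N)\asymp\lambda_Y(N)\asymp N$, which by $\lambda_X\lambda_{X^\times}=N$ and symmetry forces $X=Y=\ell_1$. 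The two mixed branches I would rule out using the concrete model $\ell_1^N\times\ell_\infty^N$: the signs embedding $\ell_1^N\hookrightarrow\ell_\infty^{2^N}$ reduces the minimum-norm extension problem for $T_{\alpha^{(N)}}$ to a minimization of $\ell_1$-norms of Walsh polynomials on $\{\pm1\}^N$, and an elementary Fourier--Walsh calculation shows the minimum equals $N$, while $\|T_{\alpha^{(N)}}\|$ remains bounded independently of $N$, so extendibility fails.

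\textbf{Main obstacle.} The delicate step is the lower bound on $\|T_{\alpha^{(N)}}\|_{\mathcal E}$. In Proposition~\ref{sumantes} the inequality $\pi\le K\,/\pi\backslash$ is available on all of $X\otimes Y$ and the Grothendieck/Dvoretzky machinery applies without restriction; in the diagonal setting it holds only on the slice of diagonal tensors, and one must show this restricted information still controls the diagonal multiplier $X_N\to Y_N^\times$ as a $1$-summing operator. The $2$-convexity/$2$-concavity hypothesis is precisely what makes this possible: it supplies the upper and lower $\ell_2$-estimates on the basis of $X$ and $Y$ that allow the Grothendieck step of Proposition~\ref{sumantes} to be localized to diagonals, with constants depending only on the convexity/concavity constants and $K_G$. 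Executing this localization cleanly, and tracking how the resulting estimate interacts with the fundamental functions to pin them down to the two regimes $\asymp 1$ and $\asymp N$, is the technical core of the argument.
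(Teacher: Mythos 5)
Your ``if'' direction is fine: the $c_0/\ell_\infty$ cases follow from Corollary~\ref{ultimo}, and your explicit extension $\tilde T_\alpha(\xi,\eta)=\phi(\alpha\cdot\xi\cdot\eta)$ for $\ell_1\times\ell_1$ is a correct, self-contained replacement for the paper's citation of the integrality of diagonal forms on $\ell_1$. The problem is the ``only if'' direction, where the step you yourself identify as ``the technical core'' --- a lower bound on $\Vert T_{\alpha^{(N)}}\Vert_{\mathcal E}$ obtained by ``localizing'' the Dvoretzky/Grothendieck machinery of Proposition~\ref{sumantes} to the slice of diagonal tensors --- is not carried out, and it is not clear it can be. The duality argument in Proposition~\ref{sumantes} identifies the dual and bidual of the \emph{whole} tensor product with spaces of operators; knowing $\pi\le C\,/\pi\backslash$ only on diagonal elements (or, more accurately, knowing that diagonal \emph{functionals} have controlled $/\pi\backslash$-dual norm) does not feed into that identification, and no mechanism is given by which $2$-convexity/concavity would repair this. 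Everything downstream (the claimed dichotomy $\lambda\asymp1$ versus $\lambda\asymp N$ in the four branches, and in particular ruling out $\ell_2$, which is both $2$-convex and $2$-concave with $\lambda(N)=\sqrt N$) depends on this missing estimate, so the argument as written does not close. The treatment of the mixed branches also rests on an unproved quantitative claim (that the minimal $\ell_1$-norm of a Walsh extension equals $N$).

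For contrast, the paper gets the crucial lower bound with no localization of Grothendieck's inequality at all: it uses the known exact value $\Vert\phi_N\Vert_{\mathcal E^2(\ell_2^N)}=N$ (from \cite[Proposition~1.1]{Ca01} or \cite[Proposition~2.5]{CaGaJa01}) together with the ideal property of $\Vert\cdot\Vert_{\mathcal E}$ applied to the factorization of $\phi_N$ through $X_N\times Y_N$ via the identity maps $\id_X^N,\id_Y^N$. This yields $N\preceq d(\ell_2^N,X_N)\,d(\ell_2^N,Y_N)$ directly, whence both distances are $\asymp\sqrt N$; the $2$-convexity/concavity hypothesis then enters only through \cite[Lemma~1]{DeMi06}, which converts $d(\ell_2^N,X_N)\asymp\sqrt N$ into $\max\bigl(1/\lambda_X(N),\lambda_X(N)/N\bigr)\asymp1$, and a monotonicity argument on $\lambda_X$ pins $X$ down to $\ell_1$, $c_0$ or $\ell_\infty$. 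The mixed pairs are then excluded by observing that an extension of the duality pairing on $c_0\times\ell_1$ (or $\ell_1\times\ell_\infty$) would produce a projection of $\ell_\infty$ onto $\ell_1$. If you want to salvage your outline, the honest fix is to replace your ``localized Proposition~\ref{sumantes}'' step by this transfer of the known extendible norm of $\phi_N$ on $\ell_2^N\times\ell_2^N$.
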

\begin{proof}
The \textit{if} part is clear: by \cite[Proposition~2.3]{Ca01} (see also \cite[Proposition~1.2]{CaDiSe06})
on $\ell_1$ diagonal bilinear forms are integral (and, therefore, extendible), and in the other cases all bilinear forms are extendible.

For the converse, we consider  the diagonal bilinear form given by
$\phi_N(x,y)= \sum_{i=1}^N x_iy_i$. It is easily computed that $\Vert \phi_N \Vert_{\mathcal{L}^{2} (\ell_2^N)} = 1$; on the other hand, by
\cite[Proposition~1.1]{Ca01} or \cite[Proposition~2.5]{CaGaJa01} we have $\Vert \phi_N \Vert_{\mathcal{E}^{2} (\ell_2^N)} =
\Vert \phi_N \Vert_{\mathcal{N}^{2} (\ell_2^N)} = N$. Let now $\id^{N}_{X} :  \ell_{2}^N \to X_N$ and $\id^{N}_{Y} :  \ell_{2}^N \to Y_N$ be the identity mappings. Comparing the usual and
extendible norms of the bilinear forms $\phi_N$ and $ \phi_N \circ ((\id^{N}_{X})^{-1},(\id^{N}_{Y})^{-1})$, we get
$$N \preceq \Vert \id^{N}_{X} \Vert  \Vert \id^{N}_{Y} \Vert    \Vert (\id^{N}_{X})^{-1}  \Vert (\id^{N}_{Y})^{-1} \Vert.$$

By \cite[16.4]{TJ89} (see also \cite[page~138]{DeMi06}), since $X$ is a symmetric Banach
sequence space we have $d(\ell_{2}^N, X_N) = \Vert \id^{N}_{X} \Vert \ \Vert (\id^{N}_{X})^{-1} \Vert$ (and the same for $Y_N$). Therefore,
\[
 N\preceq  \Vert \id^{N}_{X} \Vert  \Vert \id^{N}_{Y} \Vert    \Vert (\id^{N}_{X})^{-1}  \Vert (\id^{N}_{Y})^{-1} \Vert =  d(\ell_{2}^N, X_N) d(\ell_{2}^N, Y_N) \, .
\]
Since we always have $d(\ell_{2}^N, X_N) \le \sqrt{ N}$ and $d(\ell_{2}^N, Y_N) \le \sqrt{N}$, we can conclude that
$\sqrt{N} \asymp d(\ell_{2}^N, X_N) =  \Vert \id^{N}_{X} \Vert \ \Vert (\id^{N}_{X})^{-1} \Vert$ (and the same for $Y_N$).
We now apply \cite[Lemma~1~(i)]{DeMi06} and get:
\[
\max\Big(\frac{1}{\lambda_{X}(N)},\frac{\lambda_{X}(N)}{N}  \Big) \asymp 1.
\]
From this we can conclude that $X$ must be $\ell_1, c_0$ or $\ell_\infty$. Indeed, suppose we split the natural numbers $\mathbb{N}=I \cup J$, so
that $\big(\frac{1}{\lambda_{X}(N)}\big)_{N\in I} \asymp 1$ and
$\big(\frac{\lambda_{X}(N)}{N}\big)_{N\in J} \asymp 1$. We have then that  $({\lambda_{X}(N)})_{N\in I}$ is bounded and
$(N)_{N\in J} \preceq  (\lambda_{X}(N))_{N\in J}$. Since $({\lambda_{X}(N)})_{N \in \mathbb{N}}$ is non-decreasing, either $I$ or $J$ must be finite. If $J$
is finite, then $({\lambda_{X}(N)})_{N \in \mathbb{N}}$ is bounded and then the norm in $X$ is equivalent to the $\sup$ norm and $X$ is $c_{0}$ or
$\ell_\infty$. If $I$ is finite, then $N \preceq (\lambda_{X}(N))_{N \in \mathbb{N}}$. Although the fundamental sequence of a symmetric Banach sequence space does not
characterize the norm, for this extreme case it is possible to prove that the norm on $X$ must be isomorphic to $\ell_1$: from the estimate
$N \preceq (\lambda_{X}(N))_{N \in \mathbb{N}}$ we easily obtain  $\lambda_{X^\times}(N) \asymp 1$ and, by the previous case, $X^\times$ must be $\ell_\infty$.
Then we have  $X=\ell_1$.
Proceeding in the same way,  $Y$ has to be either $\ell_1, c_0$ or $\ell_\infty$.

It remains to show that on  $c_0\times \ell_1$ and on $\ell_1\times \ell_\infty$ there are non-extendible diagonal bilinear forms. The mapping $c_0\times \ell_1$ given by $(x,x')\mapsto x'(x)$ is the diagonal bilinear form induced by the formal identity. An extension of this mapping to $c_0\times \ell_\infty$ would give a projection from $\ell_\infty $ to $\ell_1$ (see \cite[1.5]{DeFl93}), which does not exist. For $\ell_1\times \ell_\infty$ we can reason in a similar way.
\end{proof}

Both assumptions on symmetry and concavity/convexity in ``only if'' part of the previous theorem cannot be omitted. Indeed, if we take $c_0\oplus\ell_1$ (that seen as a sequence space
is neither symmetric nor $2$-concave or $2$-convex), then every diagonal bilinear form on $(c_0\oplus\ell_1) \times (c_0\oplus\ell_1)$ is the sum of a diagonal bilinear
form on $c_0\times c_0$ and a diagonal bilinear form on $\ell_1\times\ell_1$, and is therefore extendible.

\section*{Acknowledgements}
We wish to thank Andreas Defant for valuable comments and useful conversations that improved the paper.
We also wish to thank the referees,
for their suggestions which improved the final shape of the paper and for pointing to us some  questions and remarks which lead to Corollary~\ref{ultimo} and Proposition~\ref{open problem}.

\end{document}